\documentclass[12pt]{amsart}
\usepackage{amsmath,amscd,amsthm,amsfonts, amssymb,amsxtra, mathrsfs,enumitem,relsize, stackengine}
\usepackage{calrsfs}
\usepackage{caption}
\usepackage[us,12hr]{datetime}
\usepackage[all]{xy} \SelectTips{cm}{}
\usepackage{hyperref}
 \hypersetup{colorlinks=true,citecolor=blue}
   \usepackage{graphicx}
     \usepackage[margin=1.5in]{geometry}
   \usepackage[font=scriptsize]{caption}
\usepackage{todonotes}
\usepackage{tikz}
\usetikzlibrary{shapes.geometric,decorations.pathreplacing}
\usetikzlibrary{positioning}
\CDat
\usepackage{scalerel}[2016/12/29]

 \subjclass{Primary: 55Q25, 55P91, 55P92, 55S91.}
\newtheorem{thm}{Theorem}[section]  
\newtheorem*{un-no-thm}{Theorem}
\newtheorem{lem}[thm]{Lemma}         

\newtheorem{bigthm}{Theorem}

\newtheorem{bigcor}[bigthm]{Corollary}

\newtheorem{bigadd}[bigthm]{Addendum}

\theoremstyle{definition}
\newtheorem{defn}[thm]{Definition}   

\theoremstyle{definition}
\newtheorem{notation}[thm]{Notation}

\theoremstyle{definition}

\theoremstyle{definition}

\theoremstyle{remark}
\newtheorem{rem}[thm]{Remark}

\newtheorem*{acks}{Acknowledgements}

\newtheorem*{out}{Outline}

\newtheorem{ex}[thm]{Example}

\DeclareMathOperator{\Sp}{Spectra}

\DeclareMathOperator{\tr}{tr}

\DeclareMathOperator{\map}{map}
\DeclareMathOperator*{\colim}{colim}

\DeclareMathOperator*{\nat}{nat}

\begin{document}
\title{On the stable Hopf invariant}
\date{\today}
\author{John R.\ Klein}
\address{Wayne State University, Detroit, MI 48202}
\email{klein@math.wayne.edu}
\begin{abstract}  We present a simplified approach to the
 stable Hopf invariant. We provide short elementary proofs
of the Cartan Formula, the Composition Formula, and
the Transfer formula. 
In addition, 
when $\pi$ is a discrete group, we show how to extend these results 
to the stable category of $\pi$-spaces. We also consider the extent to which the stable Hopf invariant is unique.
\end{abstract}
\maketitle
\setlength{\parindent}{15pt}
\setlength{\parskip}{1pt plus 0pt minus 1pt}

\def\Sp{\text{\bf Sp}}
\def\vo{\varOmega}
\def\vs{\varSigma}
\def\smsh{\wedge}
\def\flush{\flushpar}
\def\id{\text{id}}
\def\dbslash{/\!\! /}
\def\codim{\text{\rm codim\,}}
\def\:{\colon}
\def\holim{\text{holim\,}}
\def\hocolim{\text{hocolim\,}}
\def\Bbb{\mathbb}
\def\bold{\mathbf}
\def\Aut{\text{\rm Aut}}
\def\cal{\mathcal}
\def\sec{\text{\rm sec}}
\def\gda{G\text{\rm -}\delta\text{\rm -}\alpha}
\def\PDD{\text{\rm pd\,}}
\def\PD{\text{\rm P}}
\def\stableto {\,\, \mapstochar \!\!\to}

\setcounter{tocdepth}{1}
\tableofcontents
\addcontentsline{file}{sec_unit}{entry}


\section{Introduction} \label{sec:intro}
Heinz Hopf introduced what is now called the Hopf invariant in his 1931 work on maps between spheres as a way to distinguish homotopy classes of maps that are invisible to homology \cite{Hopf}. In the 1950s, this notion was generalized by James \cite[pp.~192–193]{James-suspension-triad}, who defined invariants associated to desuspending maps of based spaces $\Sigma A \to \Sigma B$. For each positive integer $n$, the $n$th James–Hopf invariant is an operation on homotopy classes
\[
\gamma_n \colon [\Sigma A,\Sigma B] \longrightarrow [\Sigma A,\Sigma B^{[n]}]
\]
which is natural in both $A$ and $B$, where $B^{[n]}$ denotes the $n$-fold smash product of $B$ with itself.

Boardman and Steer \cite{Boardman-Steer} instead consider the suspended invariants
\[
\lambda_n = \Sigma^{n-1}\gamma_n \colon [\Sigma A,\Sigma B] \longrightarrow [\Sigma^n A,\Sigma^n B^{[n]}]
\]
and show that they are axiomatically characterized by certain properties encoded in what they call a \emph{Hopf ladder}. A Hopf ladder is a collection of such natural transformations satisfying $\lambda_1 = \mathrm{id}$, $\lambda_n(\Sigma f) = 0$ for $n \ge 2$, and the Cartan formula
\[
\lambda_n (f + g) = \sum_{i+j=n} \lambda_i(f)\cdot \lambda_j(g)\, .
\]

In the 1970s,  the  {\it Segal-Snaith Hopf invariants}  $h_n$ were introduced using the approximation theorem in combination with the Snaith splitting  
\cite{Snaith-stable}, \cite{May_approximation}, \cite{Cohen-May-Taylor}, \cite{Cohen_snaith}, \cite{Segal1973}.
Consider the infinite loop space $Q(B) = \Omega^\infty \Sigma^\infty  B$ for a based space $B$. The Snaith
 splitting provides a decomposition of the suspension spectrum
\[
\Sigma^\infty Q(B) \simeq \bigvee_{n \ge 0} \Sigma^\infty D_n(B)\, ,
\]
where $D_n(B) := B^{[n]}\smsh_{\Sigma_n} ({E\Sigma_n}_+)$ is the $n$-{\it adic construction}, i.e. the reduced homotopy orbits
of the symmetric group $\Sigma_n$ acting on the $n$-fold smash product $B^{[n]}$.
We will use of a version of the Snaith splitting provided by Goodwillie's calculus of functors \cite{Goodwillie_calc3} which has the advantage of
being natural.\footnote{The argument of \cite[ex.~1.20]{Goodwillie_calc3}, which is only stated for $\Omega \Sigma X$, also applies to $Q(X)$.}

If we compose the Snaith splitting  with projection onto the 
$n$th summand and take the adjoint,  we obtain a map
\[
Q(B) \to Q(D_n(B))\, .
\]
The latter induces the operation
\[
h_n\: \{A,B\} \to \{A,D_n(B)\}\, 
\]
by applying homotopy classes $[A,{-}]$. In the display, $\{A,B\} = [A,Q(B)]$ is the abelian of homotopy classes of stable maps $A \to B$.
As far as I am aware, a Hopf-ladder type axiomatic characterization of the invariants $h_n$ is currently unknown.

In this paper we restrict our attention to the case $n=2$. In this case there is a different construction of an operation 
\[
H\: \{A,B\} \to \{A,D_2(B)\}
\]
which avoids the Snaith splitting (I first learned
of such a construction from Andrew Ranicki).  Our main goal is to develop and analyze the fundamental properties of $H$.

Let
\[
E\: [A,B] \to \{A,B\}
\] be the {\it stabilization map}, i.e., the passage from unstable homotopy classes to stable homotopy classes. 
If $f,g\in \{A,B\}$ then the {\it cup product} \cite{Boardman-Steer}
\[
f\cup g\in \{A,B \smsh B\}
\]
is the stable composition
\[
A @> \Delta_A >> A \smsh A @> f \smsh g >> B \smsh B\, .
\]
The {\it symmetrized cup product} 
\[
f\cup_2 g \in \{A,D_2(B)\}
\]
is the composition 
\[
A @> f\cup g >> B\smsh B \to D_2(B)
\]
where the map $B\smsh B \to D_2(B)$ is the homotopy quotient by the cyclic group $\Bbb Z_2= \Sigma_2$.

We are now in position to state the main result.

\begin{bigthm} \label{bigthm:Hopf-properties} There is an operation
\[
H\: \{A,B\} \to \{A,D_2(B)\}
\]
called the  {\it stable Hopf invariant} that
is natural in both $A$ and $B$ and which 
satisfies:
\begin{enumerate}[label=(\roman*).]
\item (Normalization). $H(E(f)) = 0$;
\item (Cartan Formula). $H(f+g) = H(f) + H(g) + f \cup_2 g$; 
\item (Transfer Formula). $\tr H(f)=f\cup f - \Delta_B \circ f$;
\item (Composition Formula). $H(g\circ f) = H(g)\circ f + D_2(g) \circ H(f)$.
\end{enumerate}
\end{bigthm}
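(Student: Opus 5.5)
We will construct $H$ directly from the failure of the diagonal to commute with stable maps, in the spirit of Ranicki's construction. Represent $f\in\{A,B\}$ by an unstable map $f\colon \Sigma^V A\to \Sigma^V B$, where $V=\mathbb R^n$ and $n$ is large. Consider the two maps $\Sigma^V A\to (\Sigma^V B)\wedge(\Sigma^V B)=\Sigma^{V\oplus V}(B\wedge B)$ given by $\Delta\circ f$ and $(f\wedge f)\circ\Delta$. Both are $\mathbb Z_2$-equivariant for the evident involution swapping factors, and hence so is the source $\Sigma^V A$ mapped by the diagonal $V\to V\oplus V$. The key input is an equivariant splitting. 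Write $V\oplus V = V_\Delta\oplus V_{-}$, where $\mathbb Z_2$ acts by $-1$ on $V_{-}$. Then $(f\wedge f)\circ\Delta$ and $\Delta\circ f$ agree after restriction to $\mathbb Z_2$-fixed points, since both are $\Sigma^V f$ composed with the diagonal there. Their difference therefore lives in equivariant maps relative to the fixed points, i.e. in maps out of the cofiber $\Sigma^{V}A\wedge (S^{V_-}/S^0)$ with $S^{V_-}\setminus 0\simeq S(V_-)$ a free $\mathbb Z_2$-sphere.

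Concretely, we use the cofibration $S(V_-)_+\to S^0\to S^{V_-}$ and the Adams isomorphism: equivariant maps from $\Sigma^V A\wedge S(V_-)_+$ into $\Sigma^{V\oplus V}(B\wedge B)$ with free source correspond, as $n\to\infty$, to nonequivariant stable maps $A\to (B\wedge B)_{h\mathbb Z_2}=D_2(B)$. We define $H(f)$ as the obstruction class, i.e. the difference element of the two equivariant maps rel the fixed set. We then check independence of $V$ and of the representative, and naturality in $A$ and $B$. All of this is formal because each step is functorial in the unstable representative.

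We next verify the properties in the order stated.

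\textbf{Normalization.} If $f=E(f_0)$ is unstable, then $(f\wedge f)\Delta=\Delta f$ holds on the nose, so the difference class vanishes.

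\textbf{Cartan.} Write $f+g$ using the pinch $\Sigma^V A\to\Sigma^V A\vee\Sigma^V A$. Then $(f+g)\wedge(f+g)$ expands into four terms: $f\wedge f$, $g\wedge g$, and the two cross terms $f\wedge g$ and $g\wedge f$. These cross terms are swapped by $\mathbb Z_2$, so together they form an induced free piece $\mathbb Z_{2+}\wedge(\cdot)$. Under the Adams isomorphism this piece contributes exactly $f\cup g$ pushed to $D_2(B)$, which is $f\cup_2 g$.

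\textbf{Transfer.} The transfer $\tr\colon D_2(B)\to B\wedge B$ corresponds to forgetting equivariance. Nonequivariantly, the difference class is just the difference of the two maps, namely $f\cup f-\Delta_B\circ f$. The sign and orientation conventions have to be fixed so that this holds.

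\textbf{Composition.} Factor the obstruction for $g\circ f$ by inserting $(g\wedge g)\Delta f$ between $\Delta gf$ and $(g\wedge g)(f\wedge f)\Delta$. Difference classes are additive along such chains. The first difference gives $H(g)\circ f$, and the second is $(g\wedge g)$ applied equivariantly to $H(f)$, which descends to $D_2(g)\circ H(f)$.

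The main obstacle is making the "difference class" rigorous and additive. This means the equivariant obstruction must be a well-defined homotopy class, independent of choices of homotopy on fixed points, and compatible with the Adams isomorphism identification. I would first try to avoid ad hoc homotopies by working in the stable category of naive $\mathbb Z_2$-spectra. There one can define $H(f)$ as the unique lift through the fiber sequence
\[
(\Sigma^\infty B\wedge B)_{h\mathbb Z_2}\to (\Sigma^\infty B\wedge B)^{h\mathbb Z_2}\text{-type Tate sequence},
\]
namely a lift of $(f\wedge f)\Delta-\Delta f$ that is canonical because both maps have a canonical common image in the geometric fixed points. Uniqueness of this lift, and the Transfer formula's exact sign, are where I expect the most care to be needed.
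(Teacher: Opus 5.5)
\textbf{There is a gap at the first step: as written, your construction gives the zero operation.} Your two maps $\Sigma^V A\to \Sigma^V B\wedge \Sigma^V B$ use the diagonals of $\Sigma^V A$ and $\Sigma^V B$. The representative $f\colon \Sigma^V A\to\Sigma^V B$ is an honest map of spaces, so $\Delta_{\Sigma^V B}\circ f=(f\wedge f)\circ\Delta_{\Sigma^V A}$ holds identically. This is exactly your Normalization argument, and it applies to every representative. Worse, a source $\Sigma^V A$ with trivial action maps equivariantly only into the fixed set $S^{V_\Delta}\wedge B$, so there is no relative class at all.

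Your later appeal to maps out of $\Sigma^V A\wedge (S^{V_-}/S^0)$ presupposes maps defined on $S^{V_-}\wedge \Sigma^V A=S^{V\oplus V}\wedge A$, which you never specify. The missing idea is this: apply the diagonal only to $A$ and $B$, and double the sphere coordinates rather than embed them diagonally. The two maps on $S^{V_-}\wedge S^{V_\Delta}\wedge A$ should be
\[
(f\wedge f)\circ(1_{S^{V\oplus V}}\wedge \Delta_A)\quad\text{and}\quad 1_{S^{V_-}}\wedge(\Sigma^V\Delta_B\circ f).
\]
These agree on the nose on the fixed set $S^{V_\Delta}\wedge A$. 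They differ in general because a stable $f$ need not have the form $1_{S^V}\wedge f_0$; when it does, they coincide, and that is Normalization. With $V=\mathbb{R}^n$ and $S^{V\oplus V}=S^{n\alpha+n}$, these are exactly the paper's two representatives. Your relative class is then the Crabb--Ranicki geometric Hopf invariant $h_V$.

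\textbf{How the repaired route compares with the paper.} After the repair, you extract the difference differently from the paper, and the work you called ``formal'' is real.

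The paper treats both maps as classes in the genuine group $\{A,B\wedge B\}_{\underline{\mathbb{Z}_2}}$ and notes they have the same image under the geometric fixed point map $\eta$. It then uses the split tom~Dieck cofibre sequence
\[
\Sigma^\infty D_2(B)\xrightarrow{\iota_B}(\Sigma^\infty_{\mathbb{Z}_2}B\wedge B)^{\mathbb{Z}_2}\xrightarrow{\eta}\Sigma^\infty B .
\]
Here $\iota_B$ is injective with image $\ker\eta_*$, so $H(f)$ is the unique preimage of the difference, with no choices involved. Properties (i)--(iv) then become identities in an abelian group. The Transfer Formula, for instance, follows because $\iota_B$ followed by forgetting equivariance is $\tr$.

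In your setting you still have to prove two things:
\begin{itemize}
\item The identification of relative classes on $\Sigma S(V_-)_+\wedge\Sigma^V A$ with $\{A,D_2(B)\}$, via duality plus the Adams isomorphism, must be compatible with enlarging $V$.
\item Pulling back along $S^{V_-}\to S^{V_-}/S^0$ and then forgetting equivariance must be $\tr$.
\end{itemize}
Neither follows from functoriality in the representative. The paper avoids both by working in the colimit over the complete universe from the start.

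\textbf{Your naive-spectra fallback does not work as stated.} Borel $\mathbb{Z}_2$-spectra have no geometric fixed points. The cofibre of $X_{h\mathbb{Z}_2}\to X^{h\mathbb{Z}_2}$ is the Tate construction, and that sequence is not split. So a lift of $(f\wedge f)\Delta-\Delta f$ is determined only by a specified null-homotopy of its image in $X^{t\mathbb{Z}_2}$; it is not unique. Such a null-homotopy would itself have to come from naturality of the Tate diagonal, which is a substantial input. The uniqueness in the paper comes precisely from genuine equivariance, via the tom~Dieck splitting.
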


\begin{rem}  The transfer homomorphism $\tr\: \{A,D_2(B)\} \to \{A , B\smsh B\}$ arises from the regular $\Bbb Z_2$-covering space pair
\[
((B\times B) \times E\Bbb Z_2, (B \vee B) \times E\Bbb Z_2) \to ((B\times B) \times_{\Bbb Z_2} E\Bbb Z_2, (B \vee B) \times_{\Bbb Z_2} E\Bbb Z_2)
\] 
\cite[const.~4.1.1]{Adams_infinite}.
For the Segal-Snaith Hopf invariant, the Cartan formula appears in 
\cite{Caruso-Cohen-May-Taylor} (for a geometric description, see \cite[thm.~2.2]{Koschorke-Sanderson}; see also 
\cite{Kuhn_James-Hopf} for additional properties of the Segal-Snaith Hopf invariant).
For the {\it geometric Hopf invariant} of Crabb and Ranicki, a version of the Transfer and Composition Formulas appear in \cite[prop.~5.33]{Crabb-Ranicki}. 

Our approach most closely resembles the the one taken by Crabb and Ranicki with some key differences:
They fix a $\Bbb Z_2$-representation $V$ and define the geometric Hopf invariant as a natural transformation
\[
h_V \: [S^V \smsh A , S^V \smsh B]_{\Bbb Z_2} \to [\Sigma S(\alpha \otimes V)_+ \smsh S^V \smsh A , S^{(\alpha +1) \otimes V} \smsh B]_{\Bbb Z_2}\, ,
\]
where the domain is the set of homotopy classes of $\Bbb Z_2$-equivariant maps $S^V \smsh A \to S^V \smsh B$. In the display, $S^V$ denotes the one-point compactification of $V$, 
$\alpha$ is the sign representation, and $S(\alpha \otimes V)$ is the unit sphere.

By constrast, the source and target of the operation $H$  do not involve a choice of representation and 
are expressed in terms of {\it unequivariant} stable homotopy classes. Even though $H$ is phrased in unequivariant language, its
actual construction requires passing through the $\Bbb Z_2$-equivariant stable category.  
The operations $H$ and $h_V$ are related by stabilization by representations and the application of a transfer map.
\end{rem}


\subsection{Relation to the Segal-Snaith operation}
Let  $h := h_2$ be the second Segal-Snaith Hopf invariant.

\begin{bigthm} \label{bigthm:coincide}  The operations $h,H\:  \{A,B\} \to \{A,D_2(B)\}$ coincide.
\end{bigthm}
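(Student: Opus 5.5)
Both $h$ and $H$ are natural transformations of functors of the pair $(A,B)$ with values in abelian groups (or at least pointed sets). I would therefore reduce the comparison to a universal example. Take $A = Q(B)$ and let $u\in\{Q(B),B\}$ be the tautological class, namely the adjoint of the identity of $Q(B)$. Any $f\in\{A,B\}=[A,Q(B)]$ is represented by an unstable map $\hat f\colon A\to Q(B)$, and $f = u\circ E(\hat f)$. Naturality in $A$ then gives
\[
H(f) = H(u)\circ E(\hat f), \qquad h(f) = h(u)\circ E(\hat f).
\]
So it suffices to show $H(u) = h(u)$ in $\{Q(B), D_2(B)\}$. (For $H$ this uses the Composition Formula with $H(E\hat f)=0$, or just naturality in $A$ for unstable maps.)

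To compare $H(u)$ and $h(u)$, I would use the natural Snaith splitting $\Sigma^\infty Q(B)\simeq \bigvee_n \Sigma^\infty D_n(B)$. Write $\{Q(B),D_2(B)\}\cong \prod_n \{D_n(B),D_2(B)\}$ via the inclusions $\iota_n\colon D_n(B)\to Q(B)$; these are stable maps and are natural in $B$. By definition $h(u)$ is the projection onto the second summand, so $h(u)\circ\iota_n$ is $0$ for $n\neq 2$ and the identity for $n=2$. I would then compute $H(u)\circ\iota_n$ component by component.

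\emph{$n=0,1$.} The inclusion $\iota_1\colon B\to Q(B)$ is the stabilization of the unit $B\to Q(B)$, so $u\circ\iota_1 = E(\mathrm{id}_B)$. By the Composition Formula and Normalization, $H(u)\circ\iota_1 = H(E(\mathrm{id}))-D_2(u)H(\iota_1)$, which vanishes once $H(\iota_1)=0$; this holds because $\iota_1$ is itself unstable. The $n=0$ summand is trivial.

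\emph{$n=2$.} One would like to know that $u\circ \iota_2\colon D_2(B)\to B$ is null, but this is not obvious; the splitting is not compatible with $u$ in that way. Instead I would express $\iota_2$ through the transfer $\tr$ and apply the Transfer Formula. Concretely, precompose with $B\smsh B\to D_2(B)$, so the relevant map becomes the James–Hopf/quadratic piece coming from $B\times B\to Q(B)$, $(x,y)\mapsto x+y$. This map is the sum of two unstable maps $p_1,p_2$. The Cartan Formula then gives $H(p_1+p_2)=p_1\cup_2 p_2$, and restricted along the relevant cofiber this is exactly the quotient map $B\smsh B\to D_2(B)$, matching the identity. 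The issue is that a class in $\{D_2(B),D_2(B)\}$ is not detected by precomposition with $B\smsh B\to D_2(B)$. I would get around this by working $\Sigma_2$-equivariantly, using the equivariant version of the theory the abstract promises for discrete $\pi$, with $\pi=\Sigma_2$ acting on $B\times B$, and then passing to homotopy orbits.

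\emph{$n\ge 3$.} Here I expect the main obstacle. The strategy is a filtration/degree argument. $H$ is a quadratic functor in the Goodwillie sense: the Cartan Formula shows its cross-effect is bilinear. Hence $H(u)$ factors through the second Goodwillie stage $P_2$ of $\Sigma^\infty Q(B)$, and it must vanish on the homogeneous layers $D_n(B)$ for $n\ge 3$ by naturality under scaling $B$, that is, by maps factoring through $n$-fold cross-effects. I would make this precise by comparing both operations as natural transformations $\Sigma^\infty Q(B)\to\Sigma^\infty D_2(B)$ and using that such transformations out of $n$-homogeneous functors into a $2$-excisive one vanish for $n>2$.

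Combining the three cases gives $H(u)=h(u)$. By the reduction in the first paragraph, $h=H$ on all of $\{A,B\}$.
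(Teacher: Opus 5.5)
Your reduction to the universal class $u$, the decomposition of $H(u)$ along the Snaith splitting into components $\mathbf{H}_n\colon\Sigma^\infty D_n(B)\to\Sigma^\infty D_2(B)$, and your treatment of $n=1$ (Normalization applied to $E(1_B)$) are exactly the paper's. The case $n\ge 3$ is not the main obstacle. It is the one-line Goodwillie-calculus step: a transformation from an $n$-homogeneous functor into the $2$-excisive functor $\Sigma^\infty D_2$ factors through the $2$-excisive approximation of the source, which vanishes for $n>2$.

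The genuine gap is at $n=2$, which is the whole content of the theorem. Applying the Cartan Formula to $p_1+p_2$ only yields $\mathbf{H}_2\circ\pi_B=\pi_B$, and the Transfer Formula only yields $\tr\circ\mathbf{H}_2=\tr$, where $\pi_B\colon B\wedge B\to D_2(B)$. Write $\mathbf{H}_2=\theta\in\{S^0,S^0\}_{\mathbb{Z}_2}\cong\hat A(\mathbb{Z}_2)$ and let $\epsilon$ be the augmentation. Then $\theta\circ\pi_B=\epsilon(\theta)\pi_B$ and $\tr\circ\theta=\epsilon(\theta)\tr$, so all you learn is $\epsilon(\theta)=1$.

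Your proposed equivariant fix does not close this. Concretely, $\Sigma_2$ does not act freely on $B\times B$, as the Addendum requires. Moreover, $p_1,p_2$ are interchanged by the swap rather than being equivariant, so the Cartan Formula for $H^\pi$ does not apply to them.

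More fundamentally, take any $\theta$ in the kernel $K$ of $\hat A(\mathbb{Z}_2)^\times\to\{\pm1\}$, which is nontrivial (cf.\ Theorem~\ref{bigthm:unique}). The operation $\theta H$ satisfies naturality, Normalization, Cartan, Transfer, Composition and the EHP property, together with their $\pi$-equivariant versions. This follows from the two identities above and the naturality of $\theta$. Yet $(\theta H)_2=\theta\circ\mathbf{H}_2$. So these formal properties cannot distinguish $\mathbf{H}_2$ from $\theta\circ\mathbf{H}_2$, and no argument built only on them can force $\mathbf{H}_2=\mathrm{id}$. You must compute with the actual construction of $H$.

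The paper supplies exactly this computation. It detects $\mathbf{H}_2$ not on $B\wedge B$ but after precomposing with $c\colon D_2(X_+)\to D_2(X)$. Here $c^\ast$ is injective on $[{-},Z]$ for infinite loop spaces $Z$, because $X_+\to X$ has a stable section. Since $D_2(X_+)=(X^{\times 2}\times_{\mathbb{Z}_2}E(2))_+$, the composites with the split injection $\iota_X$ into $Q_{\mathbb{Z}_2}(X\wedge X)^{\mathbb{Z}_2}$ can be evaluated on configurations $[x,y,t]$:
\begin{itemize}
\item for $h$, one gets the Pontryagin--Thom (Adams) element $x\wedge y+_t y\wedge x$;
\item for $H$, the defining identity gives $(x+_t y)\wedge(x+_t y)-(x\wedge x+_t y\wedge y)$.
\end{itemize}
Using the little-cubes structure, $(x+_t y)\wedge(x+_t y)$ expands as $x\wedge x+(x\wedge y+_t y\wedge x)+y\wedge y$. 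Cancelling the diagonal terms then gives the same element as for $h$.

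The idea missing from your proposal is that the comparison must take place in the genuine fixed points $Q_{\mathbb{Z}_2}(X\wedge X)^{\mathbb{Z}_2}$. There the cross term $x\wedge y+_t y\wedge x$ remembers strictly more than its underlying class $x\wedge y+y\wedge x$. This is what forces $\theta=1$, rather than merely $\epsilon(\theta)=1$.
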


Suppose that $A$ is a CW complex of dimension $s$ and $Y$ is $N$-connected. Then
 {\it stable range} occurs when $s \le 2r+1$; in this case $E\: [A,B] \to \{A,B\}$ is surjective by the Freudenthal suspension theorem.
  The {\it metastable range} occurs when $s\le 3r+1$. It is well-known that the Segal-Snaith Hopf invariant
  is the complete obstruction to destabilization in the metastable range \cite{Milgram}. 

\begin{bigcor} \label{bigcor:coincide}  Assume $s\le 3r+1$. Let $f\in \{A,B\}$ be a homotopy class.
Then  $H(f) = 0$ if and only if
$f = E(f')$ for some $f'\in [X,Y]$.
\end{bigcor}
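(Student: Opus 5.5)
The plan is to combine Theorem~\ref{bigthm:coincide} with the classical metastable EHP sequence. Throughout, read the hypotheses as $A$ an $s$-dimensional CW complex and $B$ an $r$-connected based space. (The statement writes $X,Y$ for $A,B$ and uses both $N$ and $r$ for the connectivity.) By Theorem~\ref{bigthm:coincide} we may replace $H$ by the Segal--Snaith invariant $h=h_2$. So it suffices to show that, for $s\le 3r+1$, $f\in\{A,B\}$ lies in the image of $E$ if and only if $h(f)=0$.

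The easy direction follows from Normalization in Theorem~\ref{bigthm:Hopf-properties}: if $f=E(f')$, then $H(f)=0$.

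For the converse we study the map of spaces $B\to Q(B)\xrightarrow{h} Q(D_2(B))$, where $h$ is the adjoint of projection onto the second Snaith summand. The composite is null, because the Snaith splitting restricted along $B\to Q(B)$ lands in the first summand. This gives a map $B\to F$, where $F$ is the homotopy fiber of $h$. The key step is to show that $B\to F$ is $(3r+2)$-connected. Given this, $[A,B]\to[A,F]$ is surjective for $\dim A\le 3r+1$. Exactness of $[A,F]\to[A,Q(B)]\to[A,Q(D_2(B))]$ then shows that any $f$ with $h(f)=0$ lifts to $[A,F]$, and hence comes from $[A,B]$.

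Proving the connectivity estimate is the main obstacle. I would use Goodwillie calculus, as the paper already does for the natural Snaith splitting. The functor $B\mapsto Q(B)$ has Taylor tower with layers $Q(D_n(B))$, and the first two stages assemble into a fiber sequence $P_1\to P_2\to Q(D_2(B))$. Alternatively, one can argue directly by comparing homology. $H_*(Q(B))$ agrees with $H_*(B)\oplus H_*(D_2(B))$ through degree about $3r+2$, since $D_n(B)$ is $(n(r+1)-1)$-connected and so $D_3(B)$ is $(3r+2)$-connected. The Serre spectral sequence for $F\to Q(B)\to Q(D_2(B))$ then shows that $H_*(B)\to H_*(F)$ is an isomorphism in the required range. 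The base is $(2r+1)$-connected, so the transgression and cross terms begin only around degree $(r+1)+(2r+2)=3r+3$. For simply connected $B$ (where $r\ge1$), Whitehead's theorem turns this into the connectivity statement. The low-connectivity cases need a separate check, or can be ruled out because the range is then vacuous.

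Equivalently, one could cite Milgram's result that $h$ is the complete obstruction to desuspension in the metastable range \cite{Milgram}, and then apply Theorem~\ref{bigthm:coincide}. I would present the argument above as a justification of that citation.
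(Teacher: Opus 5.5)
Your strategy is the paper's. Theorem~\ref{bigthm:coincide} reduces the claim to the Segal--Snaith invariant $h$. Then $h$ is the complete obstruction to desuspension in the metastable range because $B\to Q(B)\to Q(D_2(B))$ is a fibration in that range. The paper cites Milgram for this, and in the proof of Addendum~\ref{bigadd:Hopf-properties} attributes it to Blakers--Massey; your Serre spectral sequence argument is a reasonable way to justify that citation.

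However, your key step is off by one: $B\to F$ is in general only $(3r+1)$-connected. Take $B=S^2$, so $r=1$. Three-locally $\Sigma^\infty D_2(S^2)\simeq\Sigma^\infty S^4$, because the swap acts trivially on $H_4(S^2\wedge S^2)$. The exact sequence then gives $\pi_5(F)_{(3)}\cong(\pi^s_3)_{(3)}\cong\mathbb{Z}/3$, whereas $\pi_5(S^2)\cong\mathbb{Z}/2$. So $\pi_5(S^2)\to\pi_5(F)$ is not onto.

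Your own spectral sequence argument gives exactly the correct bound. The first cross term $E^2_{2r+2,r+1}$ has total degree $3r+3$, and its $d_{2r+2}$ can hit $E_{0,3r+2}$. So you only get $\tilde H_m(B)\cong\tilde H_m(F)$ for $m\le 3r+1$, hence $(3r+1)$-connectivity by Whitehead. That is enough, because a $(3r+1)$-connected map induces a surjection on $[A,-]$ for $\dim A\le 3r+1$.

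A few smaller points:

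\begin{enumerate}
\item The transgressions $H_{m+1}(Q(D_2(B)))\to H_m(F)$ do not start at degree $3r+3$. They vanish in the range because $h_*$ is split surjective on reduced homology in degrees $\le 3r+2$. This holds since $h$ is adjoint to the projection onto the $D_2$-summand, and $\tilde H_*(Q(D_2(B)))\cong\tilde H_*(D_2(B))$ below degree $4r+4$. That is the fact you need to state.
\item The Goodwillie calculus remark is incorrect as written. $Q$ is $1$-excisive, so its Taylor tower is constant. The relevant tower is that of the identity functor, which supplies a fiber sequence of the form $P_2\mathrm{Id}(B)\to P_1\mathrm{Id}(B)=Q(B)\to Q(D_2(B))$.
\item The case $r=0$ is not vacuous, since $s\le 1$ is allowed. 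It lies in the stable range, where $E$ is onto and $\{A,D_2(B)\}=0$ because $D_2(B)$ is $1$-connected.
\end{enumerate}
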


A natural transformation is said to possess the {\it EHP property} if it satisfies the conclusion of Corollary \ref{bigcor:coincide}.

\subsection{Extension to $\pi$-spaces}
For applications to surgery theory, it is useful to generalize the above to 
the $\pi$-equivariant setting, where $\pi$ is any discrete group. 
Let $A$ and $B$ be based $\pi$-CW complexes such that the action is free away from the basepoint.
In this instance, we have an {\it equivariant stable Hopf invariant}
\[
H^\pi\: \{A,B\}_\pi \to \{A,D_2(B)\}_\pi
\]
where $\{A,B\}_\pi := [A,Q(B)]_{\pi}$ denotes the abelian group of homotopy classes of stable equivariant maps $A\to B$.

\begin{bigadd} \label{bigadd:Hopf-properties}  The equivariant stable Hopf invariant $H^\pi$ possesses  properties (i)-(iv).
Furthermore, the $\pi$-equivariant analog of Corollary \ref{bigcor:coincide} holds: In the metastable range, $f\in \{A,B\}_{\pi}$ destabilizes
to $[A,B]_\pi$ if and only if $H^\pi(f)$ is trivial.
\end{bigadd}

\subsection{Concerning uniqueness} Suppose that $\lambda\: \{A,B\} \to \{A,D_2(B)\}$ is a natural transformation
possessing the following properties:
\begin{enumerate}[label=(\alph*).]
\item $\lambda(f) = 0$ if $f$ is represented by an unstable map,
\item $\lambda$ possesses the Cartan formula, and
\item $\lambda$ possesses the EHP property.
\end{enumerate}

Let $A(\Bbb Z_2)$ denote the Burnside ring of the group $\Bbb Z_2$ (cf.~\cite{Bouc_Burnside-survey}).
There is an isomorphism
\[
A(\Bbb Z_2) \cong \Bbb Z[\rho]/(\rho^2-2\rho)\, ,
\]
where $\rho$ corresponds to $\Bbb Z_2$ considered as a $\Bbb Z_2$-set. Let $\hat A(\Bbb Z_2)$
denote the completion of $A(\Bbb Z_2)$ with respect to its augmentation ideal. 
In \S\ref{sec:unique} we show that
$\hat A(\Bbb Z_2)$ acts on the homotopy classes of natural transformations $D_2({-}) \to D_2({-})$.

Let  $\hat A(\Bbb Z_2)^{\times}$ denote the group of units of $\hat A(\Bbb Z_2)$, and let 
$K$ denote the kernel of the split surjective homomorphism $\hat A(\Bbb Z_2)^{\times} \to \hat A(e)^{\times} = \{\pm 1\}$ induced by augmentation.
Then
\[
K \cong \Bbb Z_2\times \Bbb Z^{\smsh}_2, 
\]
where
$\Bbb Z^{\smsh}_2$ is the group of $2$-adic integers. The  $\Bbb Z_2$-factor is generated by $\rho-1$ and the factor
 $\Bbb Z^{\smsh}_2$ is topologically generated by $2\rho-3$.

\begin{bigthm}\label{bigthm:unique} The set of natural transformations $\lambda\: \{A,B\} \to \{A,D_2(B)\}$ which satisfy
(a)-(c) above is a free and transitive $K$-set, i.e., it is a $K$-torsor.
\end{bigthm}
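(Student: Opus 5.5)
Set $\mathcal L$ for the set of natural transformations satisfying (a)--(c). The plan is to reduce everything to a comparison with the known operation $H$ of Theorem~\ref{bigthm:Hopf-properties}, which lies in $\mathcal L$ by Normalization, the Cartan Formula and Corollary~\ref{bigcor:coincide}. The argument has three steps: (1) the group $K$ acts on $\mathcal L$; (2) the action is transitive; (3) the action is free.

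\emph{Step 1: the action.} A unit $u\in\hat A(\Bbb Z_2)^\times$ acts on natural transformations $D_2(-)\to D_2(-)$ (\S\ref{sec:unique}). Let $u_*\lambda$ be the postcomposition of $\lambda$ with this natural self-map. Properties (a) and (c) are preserved because $u$ acts by a natural self-equivalence of $D_2(B)$, so it preserves vanishing. For (b) we need $u_*(f\cup_2 g)=f\cup_2 g$, i.e.\ $u$ must fix the natural map $q\colon B\smsh B\to D_2(B)$. The restriction of $u$ along $q$ is governed by the augmentation, since $\rho$ restricts to $2$ on the free orbit and $q$ sees the underlying nonequivariant class. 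Elements of $K$ therefore act trivially on $q$, which is exactly why $K$, rather than all units, appears. So $K$ acts on $\mathcal L$.

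\emph{Step 2: transitivity.} Let $\lambda,\lambda'\in\mathcal L$. By the Cartan formula the difference $\delta=\lambda-\lambda'$ is additive, and by (a) it vanishes on unstable classes. Hence $\delta$ is a natural stable operation $\{A,B\}\to\{A,D_2(B)\}$. By Yoneda (take $A=Q(B)$ and the universal class) it is represented by a natural map $\Sigma^\infty B\to\Sigma^\infty D_2(B)$ of spectra, i.e.\ a natural transformation $\Sigma^\infty\to\Sigma^\infty D_2$. Condition (a), applied to the unit $B\to Q(B)$, forces $\delta$ to vanish on the image of $E$. I would identify such natural transformations via Goodwillie calculus: $\Sigma^\infty D_2$ is homogeneous of degree $2$ and $\Sigma^\infty$ is linear. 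Natural maps should therefore factor through the degree-$2$ layer $\Sigma^\infty Q\to\Sigma^\infty D_2$, which is the Snaith projection. Natural self-maps of $\Sigma^\infty D_2$ are computed by $\mathrm{End}$ of $(\Bbb S)_{h\Sigma_2}$-type objects, which by the Segal conjecture is $\hat A(\Bbb Z_2)$. This is what I expect the action in \S\ref{sec:unique} encodes.

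Consequently every $\lambda\in\mathcal L$ has the form $\lambda=a_*H$ with $a\in\hat A(\Bbb Z_2)$ acting by postcomposition on $D_2$. To make this precise I would use Theorem~\ref{bigthm:coincide} to replace $H$ by $h$, which is induced by the natural map $Q(B)\to Q(D_2B)$. A natural $\lambda$ is then determined by its universal example $Q(B)\to Q(D_2(B))$, whose restriction to the degree-$\le 2$ part of the Snaith splitting is constrained by (a) and (b). Cartan compatibility forces $a$ to fix $q$, so $a$ has augmentation $1$. The EHP property (c) forces $a$ to be a unit. Otherwise $a_*H$ would vanish on a class with $H\neq 0$ in the metastable range; test on $B$ a sphere and $A$ with $H(f)$ detecting the generator of $\{A,D_2S^n\}$, e.g.\ $D_2S^n$ a stunted projective space. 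Hence $a\in K$, and $\lambda=a_*H$.

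\emph{Step 3: freeness.} If $u_*H=H$ for some $u\in K$, then $u-1$ kills the universal class of $H$, which is the Snaith projection, a split surjection. So $u-1$ acts as zero on $D_2$, and $u=1$ provided the action of $\hat A(\Bbb Z_2)$ on natural self-maps of $D_2$ is faithful. Faithfulness I would check via the Segal conjecture identification $\{D_2(S^0),D_2(S^0)\}\cong\hat A(\Bbb Z_2)$-module considerations on $B=S^0$, where $D_2(S^0)=\Bbb RP^\infty_+$ and $\{\Bbb RP^\infty_+,\Bbb RP^\infty_+\}$ contains the completed Burnside ring faithfully.

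The main obstacle is Step 2: showing that every natural stable transformation $\Sigma^\infty\to\Sigma^\infty D_2$, and every natural self-map of $D_2$, is of the stated Burnside form. This requires the Goodwillie/Segal-conjecture classification of natural transformations between homogeneous functors, together with a careful check that the units compatible with (b) are exactly $K$.
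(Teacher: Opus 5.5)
Your skeleton is the paper's: write $\lambda=\theta h$ with $\theta\in\nat(D_2,D_2)\cong\hat A(\Bbb Z_2)$, get augmentation $1$ from the Cartan formula, and get freeness from the naturally split Snaith projection. Steps 1 and 3 are fine. The gap is in how Step 2 reduces an arbitrary $\lambda$ to this form.

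You claim that $\delta=\lambda-\lambda'$, being additive and zero on unstable classes, is induced by a natural map of spectra $\Sigma^\infty B\to\Sigma^\infty D_2(B)$. This is false, and it would prove too much. Evaluating on the unstable class $1_B$ gives $\delta(1_B)=0$, so $\delta$ would vanish identically and $\mathcal L$ would be a single point, contradicting $K\neq 1$.

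A concrete counterexample: $h$ and $(\rho-1)h$ both lie in $\mathcal L$, since $\rho-1$ has augmentation $1$ and $(\rho-1)^2=1$. Their difference $(\rho-2)h$ kills unstable maps. It is additive because, by your own Step 1 remark, $(\rho-2)\circ q$ is the augmentation of $\rho-2$ (namely $0$) times $q$. Yet it is nonzero, since otherwise $\rho-2$ would vanish in $\nat(D_2,D_2)$.

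What Yoneda with $A=Q(B)$ actually gives is a natural map $\Sigma^\infty Q(B)\to\Sigma^\infty D_2(B)$. Additivity only makes its adjoint $Q(B)\to Q(D_2(B))$ an $H$-map, not an infinite loop map. For the same reason your ``main obstacle'' is misplaced. Natural maps $\Sigma^\infty B\to\Sigma^\infty D_2(B)$ are not parametrized by $\hat A(\Bbb Z_2)$ and need not vanish; $q\circ\Delta_B$ is one. The Goodwillie vanishing you have available goes the other way: maps from an $n$-homogeneous functor to an $m$-excisive one are null when $m<n$.

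The argument that works is the one you only gesture at after ``to make this precise''. Decompose the universal map as $\bigvee_n\Sigma^\infty D_n(B)\to\Sigma^\infty D_2(B)$ with components $\lambda_n$.
\begin{itemize}
\item $\lambda_n$ is null for $n>2$, by the vanishing just stated.
\item $\lambda_1$ is null by (a) applied to $1_B$. This is where (a) is needed; calculus alone cannot give it.
\item $\lambda_2=\theta$.
\end{itemize}
To pass from the Cartan formula to $\theta\circ q=q$ you need a universal example. The paper takes $p_1+p_2$ on $(B\times B)_+$ and uses injectivity of $\{B\smsh B,D_2(B)\}\to\{B_+\smsh B_+,D_2(B)\}$.

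Your unit argument is also not a proof. Nothing forces a non-unit to annihilate some nonzero $H(f)$ in the metastable range, and you exhibit no such $f$. The paper instead compares the EHP sequences of $h$ and $\theta h$. It then uses (c) with $B=S^{r+1}$, $r$ odd, to show the fibre $W_\theta$ of $\theta$ is contractible.

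Once augmentation $1$ is known, you can bypass (c) entirely. Write $\hat A(\Bbb Z_2)=\Bbb Z\oplus\Bbb Z^\wedge_2\,e$ with $e=\rho-2$ and $e^2=-2e$. Then $1+xe$ has inverse $1-x(1-2x)^{-1}e$, as in the paper's note added in proof.
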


\begin{out} Section \ref{sec:equivariant} introduces $\Bbb Z_2$-equivariant stable homotopy from a low-tech point-of-view. Section \ref{sec:tomDieck}
recalls the tom~Dieck splitting in the case of the group $\Bbb Z_2$. In  Section \ref{sec:stable-Hopf} we provide a construction of the stable Hopf invariant.
Section \ref{sec:main-results} concerns the proof of Theorem \ref{bigthm:Hopf-properties}.
The proof of Theorem \ref{bigthm:coincide} appears in  Section \ref{sec:uniqueness}. Section \ref{sec:equivariant-properties} contains the proof of  Addendum \ref{bigadd:Hopf-properties}.
In Section \ref{sec:unique} we prove Theorem \ref{bigthm:unique}.
\end{out}

\begin{acks} Andrew Ranicki explained to me the construction of $H$ about 25 years ago (a chain level version appears in the proof of
\cite[prop.~1.5]{Ranicki}). Greg Arone was also aware of such a construction.
I am grateful to the referee for a careful reading of the paper, especially for pointing out various errors I made in an ealier draft, 
as well as the observation that stable Hopf invariants are not necessarily unique.
\end{acks}

\section{$\Bbb Z_2$-equivariant stable homotopy} \label{sec:equivariant} 
Let $T$ be the category of compactly generated weak Hausdorff spaces
and let $T_\ast$ be the corresponding category of based spaces.
We let $T_\ast(\Bbb Z_2)$ be the category of based spaces with (left)  $\Bbb Z_2$-action.
If $X$ is an unbased $\Bbb Z_2$-space, we let $X_+$ denote the corresponding
based one obtained from $X$ by taking a disjoint basepoint.

For based $\Bbb Z_2$-spaces $A,B$,  the function space of unequivariant based maps
\[
\map_\ast(A,B)
\]
is equipped with a preferred $\Bbb Z_2$-action, where the action is defined by conjugating functions.

Up to isomorphism, there are exactly two irreducible
real orthogonal representations of $\Bbb Z_2$ of positive dimension. These are the trivial representation $1$ and the sign representation $\alpha$.
We choose a {\it complete universe} $\cal U$ for $\Bbb Z_2$ given by the direct sum of countably many copies $1$ and $\alpha$.
Then every finite dimensional subrepresentation of $\cal U$ is a direct sum of the form
\[
s\alpha + t \, ,
\]
for non-negative integers $s$ and $t$.

If $V\subset \cal U$ is a finite dimensional subrepresentation, then we let $S^V$ denote its one-point compactification. The latter is a sphere
of dimension $\dim V$ equipped with a based $\Bbb Z_2$-action. 

Suppose that $Y$ is a based $\Bbb Z_2$-CW complex. We give $S^V\smsh Y$ the diagonal action. The based function space
\[
\Omega^V\Sigma^V (Y) := \map_\ast(S^V,S^V \smsh Y)
\]
is equipped with a  $\Bbb Z_2$-action. Then the colimit
\[
Q_{\Bbb Z_2}(Y) := \colim_{V\in \cal U} \Omega^V\Sigma^V (Y)
\]
is equipped with a $\Bbb Z_2$-action. The latter is the zeroth space of a $\Bbb Z_2$-equivariant spectrum
\[
\Sigma^\infty_{\Bbb Z_2}(Y)
\]
whose $V$-th space is $Q_{\Bbb Z_2}(S^V\smsh Y)$.

\begin{defn} Let $X$ be a based $\Bbb Z_2$-CW complex. Set
\[
\{X,Y\}_{\underline{\Bbb Z_2}} := \colim_{V\subset \cal U} \pi_0(\map_\ast(S^V \smsh X,S^V \smsh Y)^{\Bbb Z_2}) \, .
\]
\end{defn}

\begin{rem} Alternatively, 
\[
\{X,Y\}_{\underline{\Bbb Z_2}}  =  \pi_0(\map_\ast(X,Q_{\Bbb Z_2}(Y))^{\Bbb Z_2})
\]
where $\map_\ast(X,Q_{\Bbb Z_2}(Y))^{\Bbb Z_2}$ is the based mapping space of $C$-equivariant maps
$X\to Q_{\Bbb Z_2}(Y)$.
\end{rem}

\begin{defn}[$\Bbb Z_2$-equivariant Stable Maps]
Suppose that $X$ and $Y$ are  based $\Bbb Z_2$-CW complexes.  An {\it equivariant stable map} $X \to Y$
is an equivariant map $X\to Q_{\Bbb Z_2}(Y)$. 
\end{defn}

Equivariant stable maps can be composed and form an $\infty$-category. 
If $X$ is finite, then there is a finite dimensional representation $V\subset \cal U$
such that a stable map is represented by an equivariant map $S^V \smsh X \to S^V \smsh Y$, where two such maps agree if they
coincide in the colimit of the mapping space.

\section{The tom~Dieck splitting for $\Bbb Z_2$} \label{sec:tomDieck}
Let $X$ be a free unbased $\Bbb Z_2$-CW complex of finite type with orbit space  $X/\Bbb Z_2$.
Then one has an {\it Adams isomorphism} \cite[\S II.7]{May_equivariant}
\begin{equation} \label{eqn:Adams-iso}
\Sigma^\infty (X/\Bbb Z_2)_+ @> \simeq >> (\Sigma^\infty_{\Bbb Z_2}X_+)^{\Bbb Z_2}\, ,
\end{equation}
where the target is the genuine fixed points of $\Bbb Z_2$ acting on the genuine $\Bbb Z_2$-spectrum $\Sigma_{\Bbb Z_2}(X_+)$.
Note that $X/\Bbb Z_2$ is equipped with the trivial action of $\Bbb Z_2$.

The map of spectra \eqref{eqn:Adams-iso} represents an element of the equivariant stable homotopy group
\[
 \{(X/\Bbb Z_2)_+, X_+\}_{\underline{\Bbb Z_2}} 
 \]
and is
induced by the stable transfer map for the regular covering space
\[
p\: X \to X/\Bbb Z_2 \, .
\]
\begin{ex} 
Here is a sketch of the homotopy class of the Adams isomorphism in a special case: Assume that $X$ is a smooth  
compact codimension zero $\Bbb Z_2$-submanifold of  $V := \Bbb R^{n\alpha}$ for $n$ sufficiently large.
Let $e\: X\to V$ be the inclusion.
Then the embedding $(e,p)\: X \to V\times X/{\Bbb Z_2}$ has normal bundle the trivial $G$-bundle $X\times V \to X$.   
The Pontryagin-Thom construction of $(e,p)$  is therefore a $\Bbb Z_2$-equivariant map
\[
p^!\: S^V \smsh (X/\Bbb Z_2)_+ \to S^V \smsh X_+\, 
\] 
which defines the desired element of $\{(X/\Bbb Z_2)_+, X_+\}_{\underline{\Bbb Z_2}}$.
\end{ex}

\begin{ex} \label{ex:x-plus-y} Let $X = \{x_-,x_+\} \subset V$ be an equivariant subset of cardinality two, i.e., $x_- = -x_+$. 
In this case, the Adams isomorphism is an equivalence
\[
S^0 @>\simeq >> (\Sigma^\infty_{\Bbb Z_2} X_+)^{\Bbb Z_2}
\]
whose associated homotopy class may be described as follows:
Choose an equivariant  tubular neighborhood $U = U_{x_-} \amalg U_{x_+}$
of $X$. Then the interior of $U$ is identified with $V \amalg V$. Apply the Pontryagin-Thom construction to obtain an equivariant map
\[
x_-+_t x_+\: S^V  \to S^V \smsh X_+ \, ,
\]
where in the above the expression $+_t$ indicates the dependence  on the choice of  tubular neighborhood. The equivariant homotopy class
of $x_-+_t x_+$ is the desired  element of $\{S^0,X_+\}_{\Bbb Z_2}$. 
whose associated {\it un}equivariant homotopy class represents the sum $x_- + x_+\in \{S^0,X_+\}$, where 
$x_-,x_+\: S^0 \to X_+$ now denote the evident inclusions. 

However, the reader should not make the mistake of
thinking of $x_- +_t x_+$ as the result of summing $x$ and $y$, since the latter do not represent elements of $\{S^0,X_+\}_{\underline{\Bbb Z_2}}$.
\end{ex}

\begin{ex} \label{ex:x-plus-y-again}  The previous example indicates how to obtain an explicit 
description of the Adams isomorphism in the general case. Let $E_n(2)$ be the second space of the little $n$-cubes operad. A point of $E_n(2)$
is a rectilinear embedding
\[
(0,1)^{\times n}\times  \{1,2\} \to (0,1)^{\times n}\, .
\]
Then $E_n(2)$ is a free $\Bbb Z_2$-space having the equivariant homotopy type of $S^{n-1}$ with its antipodal action.
The quotient map $E_n(2) \to E_n(2)/\Bbb Z_2$ is a regular two-fold cover.

Let $X$ be a free $\Bbb Z_2$-CW complex which for simplicity we take to be finite. 
If $n$ is sufficiently large with respect to the dimension of $X$, then the two fold cover $X\to X/\Bbb Z_2$ is classified by a map
\[
t\: X/\Bbb Z_2 \to E_n(2)/\Bbb Z_2
\]
in the sense that there is a pullback square
\[
\xymatrix{
X \ar[r]^{\tilde t}\ar[d] & E_n(2) \ar[d] \\
X/\Bbb Z_2 \ar[r]_(.4){t} & E_n(2)/\Bbb Z_2\, .
}
\]
where the top horizontal map $\tilde t$ is equivariant. 

If $[x]\in X/\Bbb Z_2$ is a point, then let $\{x_-,x_+\}$ denote its preimage in $X$. 
Then $t(x) \in E_n(2)/\Bbb Z_2$  defines a tubular neighborhood of the center of
each little cube in the configuration. Identify $x_\pm$ with the center of the first cube in $\tilde t(x_\pm)$. Then Pontryagin-Thom construction 
defines an equivariant map
\[
x_- +_t x_+\: S^V \to S^V \smsh \{x_-,x_+\}_+ @> \subset >> S^V \smsh X_+\, ,
\]
where $V = \Bbb R^{n\alpha}$ is $\Bbb R^n$ with the antipodal action. We emphasize here that 
the notation  $+_t$ indicates the dependence on the little $n$-cube configuration
$t(x) \in E_n(2)/\Bbb Z_2$. 

The operation $[x] \mapsto x_- +_t x_+$ 
defines a map
\[
(X/\Bbb Z_2)_+ \to (\Omega^V(S^V \smsh X_+))^{\Bbb Z/2} 
\]
which passes to the Adams isomorphism upon stabilization.
\end{ex}

\subsection{The relative case}
More generally, suppose that $Y$ is an arbitrary based $\Bbb Z_2$-CW complex.
If we replace $Y$ by $Y \times E\Bbb Z_2$, then one has a regular covering space pair
\[
(Y \times E\Bbb Z_2, E\Bbb Z_2) \to (Y \times_{\Bbb Z_2} E\Bbb Z_2,B\Bbb Z_2)
\]
and therefore a relative transfer map of pairs which induces on quotients
an equivalence
\[
\Sigma^\infty(Y_{h\Bbb Z_2}) @>\simeq >> \Sigma^\infty_{\Bbb Z_2} (Y \smsh {E\Bbb Z_2}_+)^{\Bbb Z_2}\, , 
\]
where $Y_{h\Bbb Z_2} := Y \smsh_{\Bbb Z_2} {E\Bbb Z_2}_+$ is the reduced Borel construction of $\Bbb Z_2$ acting on $Y$.

\begin{rem}  The classical transfer map $\tr\: \Sigma^\infty Y_{h\Bbb Z_2} \to  \Sigma^\infty Y$
of the above covering space pair of 
 is  the composition
\[
\Sigma^\infty Y_{h\Bbb Z_2}  \to (\Sigma^\infty_{\Bbb Z_2} Y)^{\Bbb Z_2} @> \subset >> \Sigma^\infty_{\Bbb Z_2}(Y) \simeq \Sigma^\infty (Y)\, .
\]
\end{rem}

With $Y$ as above, there is always a split cofiber sequence of spectra
\[
\Sigma^\infty Y_{h\Bbb Z_2} \to (\Sigma^\infty_{\Bbb Z_2}Y)^{\Bbb Z_2} @>\eta >> \Sigma^\infty (Y^{\Bbb Z_2})\, .
\]
The first map in the sequence  is a composition:
\[
\Sigma^\infty Y_{h\Bbb Z_2}  @> \simeq >>  (\Sigma^\infty_{\Bbb Z_2}(Y \smsh {E\Bbb Z_2}_+))^{\Bbb Z_2} \to  (\Sigma^\infty_{\Bbb Z_2} Y)^{\Bbb Z_2}\, ,
\]
where the first map is the Adams isomorphism and the second map is
induced by collapsing $E\Bbb Z_2$ to a point.

The splitting map $\Sigma^\infty (Y^{\Bbb Z_2}) \to (\Sigma^\infty_{\Bbb Z_2}Y)^{\Bbb Z_2}$ is induced by the inclusion $Y^{\Bbb Z_2} \to Y$, more precisely, it is adjoint to the map
\[
Y^{\Bbb Z_2} \to (Q_{\Bbb Z_2}Y)^{\Bbb Z_2} 
\]
which sends a a fixed point of $y\in Y^{\Bbb Z_2}$ to the point represented by the equivariant map $S^0 = S^0 \smsh \{y\}_+ \to S^0 \smsh Y$.

\subsection{Specialization}
Let $B$ be a based CW complex. We will be interested in applying the above observation to the based $\Bbb Z_2$-CW complex
\[
Y = B \smsh B\, ,
\]
where the action of $\Bbb Z_2$ is given by permuting the factors of the smash product. In this case the split cofiber sequence is
\[
\Sigma^\infty D_2(B)  @>\iota_B >>  (\Sigma^\infty_{\Bbb Z_2} (B\smsh B))^{\Bbb Z_2} \to \Sigma^\infty B \, ,
\]
since $(B\smsh B)^{\Bbb Z_2} = B$.
The splitting map for the cofiber sequence  is induced by the reduced diagonal $\Delta_B\: B\to B\smsh B$.


Consider the commutative square
\[
\xymatrix{
(B^{\times 2} \times \Bbb Z_2, B^{\vee 2} \times \Bbb Z_2)  \ar[r] \ar[d] & (B^{\times 2} \times_{\Bbb Z_2} \Bbb Z_2 , B^{\vee 2}, \times_{\Bbb Z_2} \Bbb Z_2)  \ar[d] \\
(B^{\times 2} \times E\Bbb Z_2, B^{\vee 2} \times E\Bbb Z_2)   \ar[r] & (B^{\times 2} \times_{\Bbb Z_2} E\Bbb Z_2 , B^{\vee 2}, \times_{\Bbb Z_2} E\Bbb Z_2)
}
\]
in which the vertical maps are $2$-fold coverings and the horizontal maps are induced by  the equivariant inclusion $\Bbb Z_2 \to E\Bbb Z_2$ (i.e., inclusion $S^0 \subset S^\infty$).

Using naturality, we infer that there is a homotopy commutative diagram
\[
\xymatrix{
\Sigma^\infty(B \smsh B) \ar[r]^(.35){\simeq} \ar[d]_{({-})_2} &  (\Sigma^\infty_{\Bbb Z_2} (B\smsh B \smsh {\Bbb Z_2}_+))^{\Bbb Z_2}\ar[d]   \ar[r] & \ast \ar[d] \\
 \Sigma^\infty (D_2(B))  \ar[r]_{\iota_B}  &  (\Sigma^\infty_{\Bbb Z_2} (B\smsh B))^{\Bbb Z_2}  \ar[r]  & \Sigma^\infty B\\
 }
 \]
 whose rows are the tom Dieck cofiber sequences.
The  map $({-})_2$ is induced by the inclusion $\Bbb Z_2 \to E\Bbb Z_2$:
\[
B \smsh B = (B \smsh B)\smsh_{\Bbb Z_2} {\Bbb Z_2}_+@> >> (B \smsh B)\smsh_{\Bbb Z_2} {E\Bbb Z_2}_+ = D_2(B)\, .
\]

\section{Definition of the stable Hopf invariant} \label{sec:stable-Hopf}
Given a stable map $f\: A \to B$, i.e., a map $A\to Q(B)$, we may associate two equivariant stable maps
$\Delta_B \circ f, (f\smsh f)\circ \Delta_A\: A\to B\smsh B$. 

Without loss in generality, we may assume that $A$ is a finite complex; then
$f\: S^n \smsh A \to S^n \smsh B$.
The map  $ \Delta_B\circ f$ is given by the composition
\[
S^n \smsh A @>f >> S^n \smsh B @> \Delta_B >> S^n \smsh B \smsh B\, .
\]
For the second map, take $f\smsh f$ and shuffle
To obtain an equivariant map
\[
S^n \smsh S^n \smsh A \smsh A \cong S^n \smsh A \smsh S^n \smsh A @>f \smsh f >>  S^n \smsh B \smsh S^n \smsh B \cong S^n \smsh S^n \smsh B \smsh B \, .
\]
where $S^n \smsh S^n$ is given the action that switches factors. Next, note that 
\[
S^n \smsh S^n \cong S^{n\alpha + n}\, ,
\]
where $\alpha$ is the sign representation,
and $n\alpha + n$ is notation for the direct sum of $n$-copies of $\alpha$ with $n$-copies of the trivial representation.

Making this identification and taking the composition
\[
S^{n\alpha + n} \smsh A @>\Delta_A >> S^{n\alpha + n} \smsh A \smsh A @> f \smsh f >> S^{n\alpha + n} \smsh B \smsh B 
\]
yields the desired equivariant stable map $(f\smsh f)\circ \Delta_B$.

By taking adjunctions, we have constructed two maps $A\to Q_{\Bbb Z_2}(B\smsh B)^{\Bbb Z_2}$. Moreover, it is readily checked that the diagram 
\[
A \, \raisebox{-0.6ex}{\stackon[-2pt]{$@>>\phantom{aa}\Delta_B \circ f\phantom{aa}>$}{$@>(f\smsh f)\circ \Delta_A>>$}}\,  Q_{\Bbb Z_2}(B\smsh B)^{\Bbb Z_2}
@> \eta >> Q(B^{\Bbb Z_2})
\]
commutes, i.e., $\eta $ coequalizes the maps $\Delta_B \circ f, (f\smsh f)\circ \Delta_A$.
It follows that the homotopy class of the difference
\[
(f\smsh f)\circ \Delta_A - \Delta_B \circ f \in \{A,B\smsh B\}_{\underline{\Bbb Z_2}}
\]
is in the image of the split injection
\[
\iota_B\: \{A,D_2(B)\} \to \{A,B\smsh B\}_{\underline{\Bbb Z_2}}\, .
\]
\begin{defn} \label{defn:defining-identity}
The {\it stable Hopf invariant} is the natural transformation
\[
H\: \{A,B\} \to \{A,D_2(B)\}
\]
such that $H(f) \in \{A,D_2(B)\} $ is the unique element such that
\begin{equation} \label{eqn:defining-identity}
\iota_B H(f) = (f\smsh f)\circ \Delta_A - \Delta_B \circ f\, .
\end{equation}
\end{defn}

\begin{rem} \label{rem:defining-identity} The natural transformation $\iota_B \circ H$ is induced by a map of spaces
\[
Q(B) \to Q_{\Bbb Z_2}(B\smsh B)^{\Bbb Z_2}
\]
which may be described as follows: Let $\gamma\: S^n \to S^n \smsh B$ represent a point of $Q(B)$.
Then $\Delta_B \circ \gamma \: S^n \to S^n \smsh B\smsh B$ represents a point of $Q_{\Bbb Z_2}(B\smsh B)^{\Bbb Z_2}$.
Likewise, so does $\gamma \smsh \gamma\: S^n \smsh S^n \to S^n \smsh B \smsh S^n \smsh B = S^n\smsh S^n \smsh B \smsh B$.
For a choice of loop multiplication on $Q_{\Bbb Z_2}(B\smsh B)^{\Bbb Z_2}$, the operation
\[
\gamma\mapsto \gamma \smsh \gamma - \Delta_B \circ \gamma 
\]
induces $\iota_B \circ H$.
\end{rem}

\section{Proof of Theorem \ref{bigthm:Hopf-properties}} \label{sec:main-results}

\subsection{The cup product again} Let $A$ and $B$ be CW complexes.
For stable maps $f,g\: A\to B$, the {\it  cup product }
\begin{equation} \label{eqn:cup}
f\cup g \in \{A,B\smsh B\}
\end{equation}
is defined by the composition of the equivariant stable maps $f\smsh g$ and $\Delta_A$, i.e.,
\[
f\cup g := (f \smsh g) \circ \Delta_A\, .
\]
If $A$ is finite, then $f\cup g$ is given as follows: Let $f\: \Sigma^j A \to \Sigma^j B$ and $g\: \Sigma^j A\to \Sigma^j B$ 
be representatives.
Then $f\cup g$ is given by the composition 
\[
\Sigma^{2j} A @> \Sigma^{2j} \Delta_{A} >> \Sigma^{2j} (A \smsh A)= \Sigma^{j} A \smsh \Sigma^j A @>f \smsh g >> \Sigma^j B \smsh \Sigma^j B = \Sigma^{2j} (B\smsh B)\, .
\]
The  cup product  induces a pairing
\[
\cup \: \{A,B\}\times \{A,B\} \to \{A,B\smsh B\}\, .
\]
The proof of the following result is straightforward and we leave its details to the reader.

\begin{lem} \label{lem:cup-prop} The  cup product  satisfies the following identities:
\begin{enumerate} [label=(\roman*).]
\item $g\cup f = \tau\circ (f\cup g)$, where $\tau\: B\smsh B \to B\smsh B$ is the twist map;
\item $(f+f')\cup g = f\cup g + f'\cup g$.
\end{enumerate}
\end{lem}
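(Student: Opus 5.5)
Both identities will be checked at the level of representatives. Because everything is natural in $A$, we may reduce to the case that $A$ is finite, as the paper does. Then $f$, $f'$ and $g$ are represented by maps $\Sigma^j A \to \Sigma^j B$ for one common $j$, which we can arrange by suspending further. The cup product is then the composite
\[
\Sigma^{2j}A \xrightarrow{\Sigma^{2j}\Delta_A} \Sigma^j A\smsh \Sigma^j A \xrightarrow{f\smsh g} \Sigma^j B\smsh \Sigma^j B .
\]

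For (i), the key input is that the reduced diagonal of $A$ commutes with the twist: $\tau_A\circ\Delta_A=\Delta_A$. Under the shuffle identifications, the twist on $\Sigma^jA\smsh\Sigma^jA$ decomposes as the twist $\tau_{S^j}$ on $S^j\smsh S^j$ smashed with $\tau_A$. Naturality of the twist gives
\[
\tau\circ(f\smsh g)=(g\smsh f)\circ\tau .
\]
Combining these two facts yields
\[
\tau\circ(f\cup g) = (g\smsh f)\circ(\tau_{S^j}\smsh \tau_A)\circ \Sigma^{2j}\Delta_A = (g\smsh f)\circ(\tau_{S^j}\smsh \Delta_A).
\]
Here the $\tau$ on the left is the total twist of $\Sigma^jB\smsh\Sigma^jB$, namely $\tau_{S^j}\smsh\tau_B$.

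Two copies of $\tau_{S^j}$ now appear, one in the source and one in the target. The map $\tau_{S^j}$ is a self-map of $S^{2j}$ of degree $(-1)^{j}$. Inserted in the source and removed from the target, these two degrees multiply to $+1$. So, as a stable map $A\to B\smsh B$ with $\tau_B$ acting on the target, we get $\tau_B\circ(f\cup g)=g\cup f$. The one step needing care is this sign bookkeeping, i.e.\ checking that the suspension coordinates are permuted consistently on both sides. One can also sidestep the sign entirely by choosing $j$ even.

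For (ii), use that addition in $\{A,B\}$ may be taken to be the pinch along one suspension coordinate of the first $\Sigma^j$ factor. Then
\[
(f+f')\smsh g=(f\smsh g)+(f'\smsh g)
\]
in the first smash coordinate, since smashing with $g$ preserves the pinch. Precomposition with $\Sigma^{2j}\Delta_A$ is compatible with this pinch, because it is a suspension of a map. Hence
\[
(f+f')\cup g=f\cup g+f'\cup g .
\]
Equivalently, this says that precomposition with the suspended map $\Delta_A$ and smashing with $g$ are both additive on stable homotopy. The main obstacle, modest as it is, is therefore the sign check in (i).
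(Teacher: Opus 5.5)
Your proof is correct, and it is the routine representative-level check the paper has in mind when it leaves this lemma to the reader after describing $f\cup g$ via maps $\Sigma^j A\to \Sigma^j B$: for (i), naturality of the twist together with $\tau_A\circ\Delta_A=\Delta_A$, with the two $(-1)^j$ degrees of $\tau_{S^j}$ cancelling stably (or disappearing for $j$ even); for (ii), additivity through a pinch in a suspension coordinate of the first factor. One small caveat is that naturality in $A$ by itself does not reduce the infinite case to finite subcomplexes, because phantom maps can intervene; still, this is the same finiteness convention the paper adopts, and for general $A$ the same argument runs in the stable homotopy category using its symmetric monoidal structure and the bilinearity of $\wedge$.
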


\begin{defn} \label{defn:symm-cup}
For equivariant stable maps $f,g\: A\to B$, the {\it symmetrized cup product}
\[
f\cup_2 g := (f\cup g)_2 \in \{A,D_2(B)\}
\] 
is the stable composition
\[
A  @> f\cup g >> B \smsh B @>({-})_2 >> D_2(B)\, .
\]
\end{defn}

Let  $\tau \:  B\smsh B \to B\smsh B$ the map which permutes factors.
Consider the map
\[
\iota_B\: \Sigma^\infty D_2(B) \to (\Sigma^{\infty}_{\Bbb Z_2} (B\smsh B))^{\Bbb Z_2}
\]
The for $f,g\in \{A,B\}$ we may form 
\[
\iota_B (f\cup_2 g) \in \{A,B\smsh B\}_{\underline{\Bbb Z_2}}
\]
\begin{notation} \label{notation:transfer2} For $f,g\in \{A,B\}$ we set
\[
\iota_B \circ (f\cup_2 g)  :=  f\cup g +_t  g\cup f \, ,
\]
cf.~Example \ref{ex:x-plus-y-again}.
\end{notation}

\begin{rem} \label{rem:transfer2}  The elements $f\cup g,g\cup f\in \{A,B\smsh B\}$ do not 
make sense as an elements 
$\{A,B\smsh B\}_{\underline{\Bbb Z_2}}$.
However, it is an elementary exercise involving the transfer to show that forgetful homomorphism
\[
 \{A,B\smsh B\}_{\underline{\Bbb Z_2}} \to  \{A,B\smsh B\}
\]
sends the element $f\cup g +_t  g\cup f$ to the element
\[
f \cup g + g\cup f = (1+\tau)(f\cup g)\, .
\]
This observation partially justifies the notation. Note that symbol $+_t$ exhibits the dependence of the construction
on our configuration space model for the Adams isomorphism.
\end{rem}

\begin{proof}[Proof of Theorem \ref{bigthm:Hopf-properties}]
As remarked above, $\iota_B \: \{A,D_2(B)\} \to \{A,B\smsh B)\}_{\underline{\Bbb Z_2}}$ is a split injection.
Recall that $H(f)$ is defined using the identity  \eqref{eqn:defining-identity}.
For the Normalization property, it suffices to note that when $f\: A\to B$ is an unstable map, then the square
\[
\xymatrix{
A \ar[r]^f \ar[d]_{\Delta_A} & B \ar[d]^{\Delta_B} \\
A\smsh A \ar[r]_{f\smsh f} & B\smsh B
}
\]
commutes. It follows that the difference $\iota_B H(f) :=  (f\smsh f) \circ \Delta_A - \Delta_B \circ f \in \{A,B\smsh B\}_{\underline{\Bbb Z_2}}$ is trivial.

The Transfer Formula is also an easy consequence of the defining identity \eqref{eqn:defining-identity},
 since the composition
 \[
 \{A,D_2(B)\}@> \iota_B >> \{A,B\smsh B)\}_{\underline{\Bbb Z_2}} \to \{A,B\smsh B\}
 \]
gives the transfer, where the second displayed map is the forgetful homomorphism.
 
The Composition Formula also follows from \eqref{eqn:defining-identity}:
If $f\: A\to B$ and $g\: B \to C$, then
 \begin{align*}
 \iota_C H(g\circ f) &= (g\circ f) \smsh (g\circ f) \circ \Delta_A - \Delta_C \circ g\circ f \, , \\
                  &=       (g\smsh g)\circ (f\smsh f) \circ \Delta_A  - (\Delta_C \circ g)\circ f \, , \\
                  &=       (g\smsh g) \circ ((f\smsh f) \circ \Delta_B-   \Delta_B \circ f)      + (g\smsh g) \circ \Delta_B - (\Delta_C \circ g)\circ f\, , \\
                   &=   (g\smsh g)\circ \iota_B H(f)  + \iota_C H(g) \circ f\, , \\
                   & = \iota_C ( D_2(g) \circ H(f)  + H(g) \circ f )\, .
                  \end{align*}
For the Cartan formula, we compute 
{\Small\begin{align*}
\iota_B H(f+g) &= ((f+g) \smsh (f+g)) \circ \Delta_A - \Delta_B\circ (f+g)\, , \\
            &= (f\smsh f) \circ \Delta_A    + ((f\smsh g) \circ \Delta_A    +_t  (g \smsh f) \circ \Delta_A)    + (g\smsh g) \circ \Delta_A)    - (\Delta_B \circ f  + \Delta_B \circ g)\, , \\
            &= ((f\smsh f) \circ \Delta_A - \Delta_B \circ f))  + ((g\smsh g) \circ \Delta_A - \Delta_B \circ g)+  ((f \smsh g) \circ \Delta_A    +_t  (g\smsh f) \circ \Delta_A) \, ,\\
&=  \iota_B H(f)           + \iota_B H(g)+  ( f\cup g +_t g\cup f)  \, ,\\
&=  \iota_B H(f)       + \iota_B H(g) +  \iota_B (f\cup_2 g)     \, ,\\
&=  \iota_B (H(f) +H(g) + f\cup_2 g )\, . \qedhere
\end{align*}}
\end{proof}

\section{Proof of Theorem \ref{bigthm:coincide}} \label{sec:uniqueness}

In this paper we provided an alternative construction of a stable Hopf invariant using the tom~Dieck splitting. 
Here we will verify that  our construction coincides with the Segal-Snaith Hopf invariant.

Recall from  \cite{May_approximation}, \cite{Segal1973} that there is a homotopy functor on based spaces $C({-})$, and a natural transformation
\begin{equation} \label{eqn:eta}
C(X) @> \eta >> Q(X)\, ,
\end{equation}
which is a weak equivalence when $X$ is cofibrant and connected.

The space $C(X)$ is given by a filtration $C_1(X) \subset C_2(X) \subset \cdots$ and 
\[
\mathop{\mathsmaller{\coprod}}_{n\ge 0} (X^{\times n} \times_{\Sigma_n} E(n))/\!\! \sim\, ,
\]
where $E(n) := \lim_{k\to \infty} E_k(n)$ is the space of $n$-disjoint little cubes in $\Bbb R^\infty$,  in which $E_k(n)$ is the space of $n$ little $k$-cubes in $\Bbb R^k$. Note that
$E(n)$ is a model for $E\Sigma_n$, i.e., a free contractible $\Sigma_n$-space, and the $\{E(n)\}_{n \ge 1}$ collectively form
the little $\infty$-cubes operad. In the display,
the  equivalence relation is generated by two kinds of operations: The first operation identifies a representative  $(x,c) \in X^{\times (n-1)} \times  E(n)$
with $(s_j x,c)$, where $s_j$ inserts the basepoint in the $j$-factor of $X^{\times n}$ and the second
operation identifies $(x,c)$ with $(x,d_j(c))$, where $d_j(c): E(n) \to E(n-1)$ 
is given by deleting the $j$-th cube.

The Pontryagin-Thom construction defines an equivariant map \eqref{eqn:eta}.
In particular, if $[x,y,t] \in X^{\times 2} \times_{\Sigma_2} E(2)$, then 
\[
\eta([x,y,t]) = x+_t y = y +_{-t} x \in Q(X) \, .
\]
where $-t$ switches the order of the pair of little cubes defined by $t$.

 Furthermore,
there is a natural equivalence 
\[
\Sigma^\infty C(X) @> \simeq >> \textstyle\bigvee\limits_{n\ge 1} \Sigma^\infty D_n(X)
\]
such that the composition
\[
\Sigma^\infty ((X^{\times n} \times_{\Sigma_n} {E_n})_+) \to \Sigma^\infty C(X) @> \simeq >> \textstyle\bigvee\limits_{n\ge 1}  \Sigma^\infty D_n(X)
\]
is homotopic to the standard  map into the $n$-summand \cite{Cohen_snaith}.

In addition, if we pass to infinite loop spaces, then the diagram
of natural transformations
\[
Q(X) @<\simeq << C(X) @>\simeq >> \textstyle\prod \limits_{n\ge 1}  Q(D_n(X)) @>\text{project} >>   Q(D_2(X)) 
\]
induces the second Segal-Snaith Hopf invariant $h$. Consequently, we may take the above description as a definition of  $h$ (see
also \cite{Segal_hopf} for Segal's approach).

\begin{proof}[Proof of Theorem \ref{bigthm:coincide}] By the above discussion, the operations
$h$ and $H$ are induced by natural transformations of spectrum-valued homotopy functors
\[
{\bold h},{\bold H}\: \textstyle\bigvee\limits_{n\ge 1} \Sigma^\infty D_n(X)  \to \Sigma^\infty D_2(X) \, ,
\]
where ${\bold h}$ is given by projection onto the second summand.

Consequently, for every integer $n \ge 1$, we have a pair if natural transformations of homotopy functors
\[
{\bold h}_n,{\bold H}_n \: \Sigma^\infty D_n(X) \to  \Sigma^\infty D_2(X)\, 
\]
defined respectively by the restrictions of ${\bold h},{\bold H}$ to $\Sigma^\infty D_n(X) $.
By definition, the natural transformation $\bold h_n$ is trivial if $n\ne 2$ and $\bold h_2$ is the identity.

Since the functor $X\mapsto \Sigma^\infty D_n(X)$ is homogeneous of degree $n$, by standard Goodwillie calculus arguments
\cite{Goodwillie_calc3}, the natural transformations  ${\bold H}_n$ are trivial for $n > 2$.
 It therefore suffices to show that ${\bold H}_1$ is homotopically trivial and that ${\bold H}_2$ is homotopic to the identity.

The natural transformation ${\bold H}_1\: \Sigma^\infty X \to \Sigma^\infty D_2(X)$ is trivial since for the identity map $1_X\: X\to X$, we have that
${\bold H_1} = {\bold H}_1 \circ 1_X  = {\bold H} \circ 1_X$ must necessarily vanish, since $1_X$ is unstable.

It remains to prove that ${\bold H}_2 = \text{id}$. This will require some preparation.
 It will be convenient to redefine $D_2(X)$ as $X^{[2]} \smsh_{\Bbb Z_2} E(2)_+$.
It is well known that the evident map of based spaces $X_+ \to X$  admits a stable section up to homotopy, i.e.,
there is a stable map $X\to  X_+$ such that the stable composite
$X\to X_+ \to X$ is homotopic to the identity. This implies that
the induced map $c\: D_2(X_+) \to D_2(X)$ also admits a stable section up to homotopy. Then
for any infinite loop space $Z$, the induced homomorphism $c^\ast\: [D_2(X),Z] \to [D_2(X_+),Z]$ is injective. Furthermore,
\[
D_2(X_+) = (X_+)^{[2]} \smsh_{\Bbb Z_2} {E(2)}_+ \cong (X^{\times 2} \times_{\Bbb Z_2} {E(2)})_+\, .
\]
Consider the composition
\begin{equation} \label{eqn:h2}
D_2(X_+) @> c >> D_2(X) @> \hat {\bold h}_2 >>  Q(D_2(X)) @> \iota_X >> Q_{\Bbb Z_2}(X\smsh X)^{\Bbb Z_2}\, ,
 \end{equation}
 where $\hat {\bold h}_2$ is the inclusion map.

If $(x,y,t) \in X^{\times 2} \times E(2)$ is a point, we write $[x,y,t] \in X^{\times 2} \times_{\Bbb Z_2} E(2)$ 
for the projection of $(x,y,t)$ to its $\Bbb Z_2$-orbit.
Then by Example \ref{ex:x-plus-y-again}, the map \eqref{eqn:h2} is up to homotopy given by
 \begin{equation}\label{eqn:xyyx}
 [x,y,t] \mapsto x\smsh y +_t y\smsh x\, ,
 \end{equation}
where again we write $x,y\: S^0 \to X$ for the based maps defined by the points $x,y\in X$ (cf.~ Examples \ref{ex:x-plus-y}, \ref{ex:x-plus-y-again}).
 
Similarly, using definition of $H$, we claim that the composition
\begin{equation} \label{eqn:H2}
D_2(X_+) @> c >> D_2(X) @>\hat {\bold H}_2 >> Q(D_2(X)) @>\iota_X >> Q_{\Bbb Z_2}(X\smsh X)^{\Bbb Z_2}
\end{equation}
(in which $\hat {\bold H}_2$ is adjoint to ${\bold H}_2$)
 is given by 
\begin{equation} \label{eqn:sum-of-maps}
[x,y,t] \mapsto (x+_t y) \smsh (x+_t y) - ((x\smsh x) + y\smsh y)) \, .
\end{equation}
To see this, note that $[x,y,t]$ maps via $\eta$ to the point of $Q(X)$ defined by the composition
\[
x+ _t y \: S^n @> p_{x,y}>>  S^n \smsh \{x,y\}_+ @> 1_{S^n} \smsh i_{x,y} >> S^n \smsh X\, ,
\]
in which  $p_{x,y}$ is the Pontryagin-Thom construction of the configuration and
 $i_{x,y} \:  \{x,y\}_+ \to X$ is the based map induced by the inclusion of subsets. By Remark \ref{rem:defining-identity} 
it's enough to identify \eqref{eqn:sum-of-maps} with  the expression ``$(x +_t y) \smsh (x +_t y) - \Delta_X \circ (x+_t y)$.'' In other words,
 it suffices to prove that $(1_{S^n} \smsh \Delta_X) \circ (x+_t y) = x\smsh x  +_t  y \smsh y$.
 
Since
\[
\Delta_{X}\circ i_{x,y} := i_{(x,x),(y,y)}\: \{(x,x),(y,y)\}_+ \to X\times X\, ,
\]
we have
\begin{align*}
 (1_{S^n}\smsh   \Delta_X) \circ (x+_t y)                &=(1_{S^n} \smsh \Delta_{X}) \circ  (1_{S^n} \smsh i_{x,y})  \circ p_{x,y}\,, \\
								&=  (1_{S^n} \smsh  i_{(x,x),(y,y)}) \circ p_{x,y}\, ,\\
                                                                      & = x\smsh x  +_t  y \smsh y\, .
                                                                     \end{align*}
Observe that as a homotopy class, the map $[x,y,t] \mapsto x\smsh x  +_t  y \smsh y$ does not
depend on $t$. We have therefore established  the claim that \eqref{eqn:H2} is given by
 \eqref{eqn:sum-of-maps}.

By functoriality of the little cubes operad action with respect to the diagonal, the map $D_2(X_+) \to Q_{\Bbb Z_2}(X\smsh X)^{\Bbb Z_2}$ defined by 
$[(x,y),t] \mapsto (x+_t y) \smsh (x+_t y)$ is homotopic to the sum of three maps
\[
[(x,y),t] \mapsto (x\smsh x) + (x\smsh y +_t y \smsh x) + (y\smsh y) \, ,
\]
with each map indicated by parentheses.
Therefore, \eqref{eqn:sum-of-maps} is homotopic to the signed sum of five maps
\begin{equation} \label{eqn:cancel}
[(x,y),t] \mapsto  (x\smsh x) + (x\smsh y +_t y \smsh x) + (y\smsh y) - (x\smsh x) - (y\smsh y) \, , \\
\end{equation} 
On the level of homotopy classes, we may cancel the diagonal terms appearing in \eqref{eqn:cancel}, since the group
$\{D_2(X_+), X\smsh X\}_{\underline{\Bbb Z_2}}$ is abelian. 
Consequently, the map \eqref{eqn:cancel}  is homotopic to the map \eqref{eqn:xyyx}.
We infer that the
\eqref{eqn:h2} and \eqref{eqn:H2} are homotopic.

As $\iota_X$ is a homotopy retract
it follows that $\hat {\bold H}_2 \circ c$ is homotopic to $\hat {\bold h}_2 \circ c$, i.e., on homotopy
classes $c^\ast(\hat {\bold H}_2) = c^\ast(\hat {\bold h}_2)$. Since $c^\ast$ is injective,
we conclude that ${\bold H}_2$ is homotopic to ${\bold h}_2$.
\end{proof}

\section{Proof of Addendum \ref{bigadd:Hopf-properties}} \label{sec:equivariant-properties}

Let $T$ be the Quillen model category of compactly generated weak Hausdorff spaces \cite{Quillen-homotopical}.
A weak equivalence of $T$ is a weak homotopy equivalence. A fibration is a Serre fibration.
A cofibration is a map which possesses the left lifting property with respect to the acyclic fibrations.

Let $T_\ast(\pi)$ be the category based left $\pi$-spaces and $\pi$-equivariant maps; this category has a zero object, i.e., the space $\ast$
consisting of a single point. Declare a map to be
a weak equivalence (fibration) if and only if it is a weak equivalence (resp.~fibration)  upon application 
of the forgetful functor $T_\ast(\pi) \to T$. A map is a cofibration if it possesses the left lifiting property with respect to the acyclic fibrations.
Then with respect to these choices, $T_\ast(\pi)$ is a model category by application of \cite[thm.~11.6.1]{Hirschhorn}.
Note that the cofibrant objects of $T_\ast(\pi)$ are retracts of objects built up from a point by attaching free cells, where a free cell
of dimension $j$ is $D^j \times \pi$. In particular, a free $\pi$-CW complex (in the base sense) is cofibrant. Note that
every object is fibrant.

For cofibrant objects $X,Y\in T_\ast(\pi)$, let 
\[
[X,Y]_{\pi} = \hom_{\text{ho}T_\ast(\pi)}(X,Y)\, .
\]
Note that $Q_{\Bbb Z_2}(Y)$ has the structure of an object of $T_\ast(\pi\times \Bbb Z_2)$.
Suppose in addition $X$ has the structure of a based $(\pi\times \Bbb Z_2)$-CW complex which is $\pi$-free after forgetting the $\Bbb Z_2$-action.
In this case, we set
\[
\{X,Y\}_{\pi \times \underline{\Bbb Z_2}} := \pi_0(\map_{T_\ast(\pi)}(X,Q_{\Bbb Z_2}(Y))^{\Bbb Z_2})\, ,
\]
where $\map_{T_\ast(\pi)}(A,B)$ denotes the mapping space of based equivariant maps $A\to B$.

\begin{proof}[Proof of Addendum \ref{bigadd:Hopf-properties}]
With respect to the above conventions, the equivariant stable Hopf invariant is defined in the same way
as when $\pi$ is trivial: For a cofibrant $\pi$-space $Y$, the tom Dieck fiber sequence
\[
\Sigma^\infty Y_{h\Bbb Z_2} \to (\Sigma^\infty_{\Bbb Z_2} Y)^{\Bbb Z_2} \to \Sigma^\infty Y^{\Bbb Z_2} 
\]
is a fiber sequence of naive spectra with $\pi$-action which splits equivariantly making use of the inclusion $Y^{\Bbb Z_2} \to Y$.
If we  set $Y = B \smsh B$ where $B\in T_{\ast}(\pi)$ is a cofibrant object, we see
then the construction of \S\ref{sec:stable-Hopf} yields 
\[
H^\pi \:\{A,B\}_{\pi} \to \{A,D_2(B)\}_{\pi}\, .
\]
The  proof that properties (i)-(iv) hold is exactly the same as the proof of Theorem \ref{bigthm:Hopf-properties} above,
where the maps and homotopy classes are to be interpreted equivariantly.

It remains to show that in the metastable range, a homotopy class $f\in \{A,B\}_\pi$ destabilizes if and only if $H^\pi(f) =0$.
We will provide a crude sketch of the argument and leave the details to the reader.
The Snaith splitting is natural,\footnote{More precisely,  the
proofs that appear in \cite{Cohen_snaith} and \cite{Goodwillie_calc3} are natural.}
so the weak homotopy equivalence
\[
\Sigma^\infty Q(B) @> \simeq >>\textstyle\bigvee\limits_{n\ge 1}\Sigma^\infty D_n(B) 
\]
is also  $\pi$-equivariant.
It follows that a $\pi$-equivariant version $h^\pi$ of the second 
Segal-Snaith Hopf invariant $h$ is defined. Moreover, the sequence
\[
B \to Q(B) @> h^\pi >> Q(D_2(B))
\]
is an equivariant fiber sequence in the metastable range by the Blakers-Massey excision theorem, where
the map $h^\pi$ is induced by projection onto the second summand of the Snaith splitting.
Then the equivariant stable Hopf invariant $h^\pi$ is the the effect of applying $[A,{-}]_\pi$ to the map
of the same name.
We infer
that $h^\pi$ yields the total obstruction to destabilization in the metastable range. Lastly,
the proof we gave that $H = h$ is valid in the equivariant setting, and we conclude that $H^\pi = h^\pi$.
\end{proof} 

\section{Proof of Theorem \ref{bigthm:unique}} \label{sec:unique}
Consider a natural transformation
\[
\lambda\: \{A,B\} \to \{A,D_2(B)\}
\]
which satisfies properties (i)-(iii). 

By naturality and the Snaith splitting, $\lambda$ is induced by a natural transformation of spectrum-valued functors
\[
\textstyle\bigvee\limits_{n\ge 1}\lambda_n\: \textstyle\bigvee\limits_{n\ge 1} \Sigma^\infty D_n(B)  \to \Sigma^\infty D_2(B)\, .
\]
Since the functor $B\mapsto \Sigma^\infty D_n(B)$ is homogeneous of degree $n$, it follows from standard Goodwillie calculus arguments
\cite{Goodwillie_calc3}, that the natural transformation  $\lambda_n$ is homotopically trivial for $n > 2$. Moreover,
by property (a), $\lambda_1$ is trivial. Hence, it suffices to determine $\lambda_2$. Note that for the Segal-Snaith Hopf invariant,  $h_2$ is the identity
\cite[app.~B]{Kuhn_diag}.

For homotopy functors $F,G$ from based spaces to spectra, let $\nat(F,G)$ be the abelian group of homotopy classes of natural transformations
from $F$ to $G$; when $F = G$, this is a ring with respect to composition. Let  $\{S^0,S^0\}_{\Bbb Z_2}$ 
be the ring of homotopy classes of equivariant stable self maps of $S^0 \smsh {E\Bbb Z_2}_+$.  
 Again by Goodwillie calculus, the ring homomorphism  
 \[
 \{S^0,S^0\}_{\Bbb Z_2} \to \nat(D_2({-}),D_2({-}))
 \] 
induced by $\alpha\mapsto \alpha \smsh_{\Bbb Z_2} 1$
 is an isomorphism. In particular $\hat A(\Bbb Z_2) = \{S^0,S^0\}_{\Bbb Z_2}$ acts on $\nat(D_2({-}),D_2({-}))$.
 
 Let $\theta\in  \{S^0,S^0\}_{\Bbb Z_2}$ correspond to $\lambda_2$; then $\lambda_2 = \theta h_2$.
 I claim that $\theta$ is a unit. To see this, represent $\theta$ by an equivariant self map $S^0 \smsh {E\Bbb Z_2}_+ \to S^0 \smsh {E\Bbb Z_2}_+$
 and let $W_\theta$ be its homotopy fiber. Then the diagram
 \[
 \xymatrix{
 B \ar[r] \ar@{=}[d] & Q(B) \ar[r]^(.4){h} \ar@{=}[d] &  Q(D_2(B)) \ar[d]^\theta\\
 B \ar[r] & Q(B) \ar[r]_(.4){\lambda}  & Q(D_2(B)
 }
 \]
 homotopy commutes where the horizontal arrows form the EHP sequences for $h$ and $\lambda$.
 The homotopy fiber of the right vertical map is given by the infinite loop space corresponding to the spectrum
 $W_{\theta} \smsh_{h\Bbb Z_2} (B\smsh B)$.  By property (c), the latter is $(3r)$-connected whenever $B$ is $r$-connected.
 If we let $B = S^{r+1}$ then 
 \[
 W_{\theta} \smsh_{h\Bbb Z_2} (S^{r+1}\smsh S^{r+1}) \simeq \Sigma^{r+1} W_{\theta} \smsh_{h\Bbb Z_2} S^{(r+1)\alpha}
\]
Suppose that $W_\theta$ is $s$-connected. If $r$ is odd, then $\Bbb Z_2$ acts trivially on the reduced homology of 
$S^{(r+1)\alpha}$. It follows that $\Sigma^{r+1}W_{\theta} \smsh_{h\Bbb Z_2} S^{(r+1)\alpha}$ is $(s+2r+2)$-connected.
Hence $s+2r+2 \ge 3r$ for $r$-odd. Since $r$ is arbitrary, it follows that $W_{\theta}$ is weakly contractible. 
We infer that $\theta$ is a unit, proving the claim.

By the Segal conjecture \cite{Carlsson_Segal}, the ring $\{S^0,S^0\}_{\Bbb Z_2}$ is canonically isomorphic to the completed Burnside ring
$\hat A(\Bbb Z_2)$.  
Consider the commutative diagram
\[
\xymatrix{
\hat A(\Bbb Z_2) \ar[r]^(.4){\cong} \ar[d] &  \{S^0,S^0\}_{\Bbb Z_2} \ar[r]^(.4){\cong}  \ar[d] & \nat(D_2(B),D_2(B)) \ar[d]^{\pi^\ast_B}  \\
\hat A(e) \ar[r]_(.4){\cong} & \{S^0,S^0\} \ar[r]_(.4){\cong} & \nat(B\smsh B ,D_2(B))
}
\]
where the right vertical arrow is induced by the evident map $\pi_B\: \: B\smsh B\to D_2(B)$.  The lower right arrow 
is an isomorphism by Goodwillie calculus since a natural transformation $B\smsh B \to D_2(B)$ is determined by
an equivariant map $\Sigma^\infty {\Bbb Z_2}_+ \to S^0$ and the latter is the same as specifying a non-equivariant map
$S^0 \to S^0$. We observe  that $B\mapsto \pi_B$ corresponds
to the identity element of $\{S^0,S^0\}  = \Bbb Z$.

 Let $p_i\: B_+ \smsh B_+ = (B\times B)_+ \to B$ denote the projections $i=1,2$. Let $q_{B_+}\: B_+ \smsh B_+ \to D_2(B)$ be the composition
\[
B_+ \smsh B_+ @>\pi_{B_+} >> D_2(B_+) \to D_2(B)\, .
\]
By the Cartan formula for $\lambda$ and $H$, we have 
\[
q_{B_+} = \lambda(p_1 + p_2) = \theta h(p_1 + p_2) = \theta q_{B_+}\, .
\]
Since $\{B\smsh B,D_2(B)\} \to \{B_+\smsh B_+,D_2(B)\}$ is injective, and the image of $q_{B_+}$ is $\pi_B$, we infer that
$\pi_B = \theta \pi_B$. Consequently, $\theta \in \hat A(\Bbb Z_2)^{\times}$ augments to the identity element.
We conclude that $\theta$ lies in the kernel $K$ of the homomorphism 
$\hat A(\Bbb Z_2)^{\times} \to \hat A(e)^{\times}$. 

Conversely, it is clear that any element $\theta\in K$ determines a natural transformation $\lambda$ saitsfying the properties (a)-(c).
 \qed

\subsection{Note added in proof} The EHP property (c) is a consequence of properties (a) and (b). This follows from the observation
that the elements of $ \{S^0,S^0\}_{\Bbb Z_2}$ which augment to units of $ \{S^0,S^0\} $ are themselves units.



\begin{thebibliography}{10}
\bibitem{Adams_infinite}
John~Frank Adams, \emph{Infinite loop spaces}, Annals of Mathematics Studies,
  vol. No. 90, Princeton University Press, Princeton, NJ; University of Tokyo
  Press, Tokyo, 1978.

\bibitem{Boardman-Steer}
J.~M. Boardman and B.~Steer, \emph{On {H}opf invariants}, Comment. Math. Helv.
  \textbf{42} (1967), 180--221.

\bibitem{Bouc_Burnside-survey}
Serge Bouc, \emph{Burnside rings}, Handbook of algebra, {V}ol. 2, Handb.
  Algebr., vol.~2, Elsevier/North-Holland, Amsterdam, 2000, pp.~739--804.

\bibitem{Carlsson_Segal}
Gunnar Carlsson, \emph{Equivariant stable homotopy and {S}egal's {B}urnside
  ring conjecture}, Ann. of Math. (2) \textbf{120} (1984), no.~2, 189--224.

\bibitem{Caruso-Cohen-May-Taylor}
J.~Caruso, F.~R. Cohen, J.~P. May, and L.~R. Taylor, \emph{James maps, {S}egal
  maps, and the {K}ahn-{P}riddy theorem}, Trans. Amer. Math. Soc. \textbf{281}
  (1984), no.~1, 243--283.

\bibitem{Cohen-May-Taylor}
F.~R. Cohen, J.~P. May, and L.~R. Taylor, \emph{Splitting of certain spaces
  {$CX$}}, Math. Proc. Cambridge Philos. Soc. \textbf{84} (1978), no.~3,
  465--496.

\bibitem{Cohen_snaith}
Ralph~L. Cohen, \emph{Stable proofs of stable splittings}, Math. Proc.
  Cambridge Philos. Soc. \textbf{88} (1980), no.~1, 149--151.

\bibitem{Crabb-Ranicki}
Michael Crabb and Andrew Ranicki, \emph{The geometric {H}opf invariant and
  surgery theory}, Springer Monographs in Mathematics, Springer, 2017.

\bibitem{Goodwillie_calc3}
Thomas~G. Goodwillie, \emph{Calculus. {III}. {T}aylor series}, Geom. Topol.
  \textbf{7} (2003), 645--711.

\bibitem{Hirschhorn}
Philip~S. Hirschhorn, \emph{Model categories and their localizations},
  Mathematical Surveys and Monographs, vol.~99, American Mathematical Society,
  Providence, RI, 2003.

\bibitem{Hopf}
Heinz Hopf, \emph{\"uber die {A}bbildungen der dreidimensionalen {S}ph\"are auf
  die {K}ugelfl\"ache}, Math. Ann. \textbf{104} (1931), no.~1, 637--665.

\bibitem{James-suspension-triad}
I.~M. James, \emph{On the suspension triad}, Ann. of Math. (2) \textbf{63}
  (1956), 191--247.

\bibitem{Koschorke-Sanderson}
Ulrich Koschorke and Brian Sanderson, \emph{Self-intersections and higher
  {H}opf invariants}, Topology \textbf{17} (1978), no.~3, 283--290.

\bibitem{Kuhn_James-Hopf}
Nicholas~J. Kuhn, \emph{The geometry of the {J}ames-{H}opf maps}, Pacific J.
  Math. \textbf{102} (1982), no.~2, 397--412.

\bibitem{Kuhn_diag}
\bysame, \emph{Stable splittings and the diagonal}, Homotopy methods in
  algebraic topology ({B}oulder, {CO}, 1999), Contemp. Math., vol. 271, Amer.
  Math. Soc., Providence, RI, 2001, pp.~169--181.

\bibitem{May_equivariant}
L.~G. Lewis, Jr., J.~P. May, M.~Steinberger, and J.~E. McClure,
  \emph{Equivariant stable homotopy theory}, Lecture Notes in Mathematics, vol.
  1213, Springer-Verlag, Berlin, 1986, With contributions by J. E. McClure.

\bibitem{May_approximation}
J.~P. May, \emph{Applications and generalizations of the approximation
  theorem}, Algebraic topology, {A}arhus 1978 ({P}roc. {S}ympos., {U}niv.
  {A}arhus, {A}arhus, 1978), Lecture Notes in Math., vol. 763, Springer,
  Berlin, 1979, pp.~38--69.

\bibitem{Milgram}
R.~James Milgram, \emph{Unstable homotopy from the stable point of view},
  Lecture Notes in Mathematics, vol. Vol. 368, Springer-Verlag, Berlin-New
  York, 1974.

\bibitem{Quillen-homotopical}
Daniel~G. Quillen, \emph{Homotopical algebra}, Lecture Notes in Mathematics,
  vol. No. 43, Springer-Verlag, Berlin-New York, 1967.

\bibitem{Ranicki}
Andrew Ranicki, \emph{The algebraic theory of surgery. {II}. {A}pplications to
  topology}, Proc. London Math. Soc. (3) \textbf{40} (1980), no.~2, 193--283.

\bibitem{Segal1973}
Graeme Segal, \emph{Configuration-spaces and iterated loop-spaces}, Inventiones
  Mathematicae \textbf{21} (1973), 213--221.

\bibitem{Segal_hopf}
\bysame, \emph{Operations in stable homotopy theory}, New developments in
  topology ({P}roc. {S}ympos. {A}lgebraic {T}opology, {O}xford, 1972), London
  Math. Soc. Lecture Note Ser., vol. No. 11, Cambridge Univ. Press, London-New
  York, 1974, pp.~105--110.

\bibitem{Snaith-stable}
V.~P. Snaith, \emph{A stable decomposition of {$\Omega \sp{n}S\sp{n}X$}}, J.
  London Math. Soc. (2) \textbf{7} (1974), 577--583.

\end{thebibliography}

\end{document}